\theoremstyle:=definition,remark,plain\do{%
\expandafter\g@addto@macro\csname th@\theoremstyle\endcsname{%
\addtolength\thm@preskip\parskip}}
\definecolor{dnrbl}{rgb}{0,0,0.3}
\definecolor{dnrgr}{rgb}{0,0.3,0}
\definecolor{dnrre}{rgb}{0.5,0,0}
 \renewenvironment{abstract}
 { \normalsize
  \list{}{\setlength{\leftmargin}{.0cm}%
    \setlength{\rightmargin}{\leftmargin}}%
  \item {\bf \abstractname.} \relax}
 {\endlist}
\theoremstyle{plain}
\newtheorem{thm}{Theorem}[section]
\newtheorem{lem}[thm]{Lemma}
\newtheorem{coro}[thm]{Corollary}
\theoremstyle{definition}
\newtheorem{defi}[thm]{Definition}
\let\c@table\c@figure
\newcommand{\Nat}{\mathbb{N}}
\newcommand{\restr}{\upharpoonright}  
\DeclarePairedDelimiter{\ceil}{\lceil}{\rceil}
\newcommand{\oset}[3][0ex]{%
  \mathrel{\mathop{#3}\limits^{
    \vbox to#1{\kern-2\ex@
    \hbox{$\scriptstyle#2$}\vss}}}}
\newcommand{\abs}[1]{|\hspace{0.03cm}{#1}\hspace{0.03cm}|}
\newcommand{\zp}{\mathbf{0}'}
\newcommand{\ze}{\mathbf{0}}
\newcommand{\parb}[1]{\big(\hspace{0.03cm}{#1}\hspace{0.03cm}\big)}
\newcommand{\parB}[1]{\Big(\hspace{0.1cm}{#1}\hspace{0.1cm}\Big)}
\newcommand{\sqbrad}[2]{\big\{\hspace{0.03cm}{#1}\hspace{0.03cm}:\hspace{0.03cm}{#2}\hspace{0.03cm}\big\}}
\DeclarePairedDelimiter{\dbra}{\llbracket}{\rrbracket}
\newcommand{\BB}{\mathcal{B}}
\newcommand{\wedga}{\ \wedge\ \ }
\newcommand{\ml}{Martin-L\"{o}f }
\newcommand{\KG}{Ku\v{c}era-G{\'a}cs\ }
\newcommand{\pz}{$\Pi^0_1$\ }
\newcommand{\eg}{e.g.\ }
\newcommand{\ce}{c.e.\ }
\newcommand{\pf}{prefix-free }
\newcommand{\twome}{2^{\omega}} 
\newcommand{\twomel}{2^{<\omega}}
\newcommand{\PA}{\mathsf{PA}}
\newcommand{\DOM}{\mathsf{DOM}} 
\newcommand{\DNC}{\mathsf{DNC}}
\newcommand{\bigon}{\textrm{\textup O}\hspace{0.02cm}(1)}
\title{Randomness  below complete theories of arithmetic\thanks{Supported by NSFC grant No.~11971501 and partially motivated by the participation  in the virtual program of the Institute for Mathematical Sciences, National University of Singapore, in 2021.}}
\author[1]{George Barmpalias}
\affil[1]{State Key Lab of Computer Science, Inst.\ of Software, Chinese Acad.\ of Sciences, Beijing, China\vspace{0.1cm}}
\author[2]{Wei Wang}
\affil[2]{Inst.\ of Logic \& Cognition and Dept.\ of Philosophy, Sun Yat-Sen University, Guangzhou, China}
\begin{document}
\maketitle
\begin{abstract}
We show that reals $z$ which compute complete extensions of arithmetic
have the random join property: for each random $x<_T z$ 
there exists random $y<_T z$ such that $z\equiv_T x\oplus y$. 
The same is true for the truth-table and the weak truth-table reducibilities.
\end{abstract}
\setcounter{tocdepth}{1}
\tableofcontents

\section{Introduction}\label{j19EOHHQvH}
The incompleteness phenomenon in  arithmetic, discovered by \citet{godel31}, implies that there is no computable binary predicate 
consistently evaluating the truth of each arithmetical sentence. 
Formal arithmetic is known as Peano arithmetic as it is generated by Peano's axioms. Theories of arithmetic can be identified with
the infinite binary sequences, {\em reals}, under a standard fixed coding of formulas into positive integers. 
Let $\PA$ denote the set of reals that encode 
complete extensions of arithmetic; the set $\PA$ forms an effectively closed set, a \pz class.
The computational power of members of $\PA$, in terms of their Turing degrees, has been investigated  since 
\citep{ScottTennen}, and its systematic study started by \citet{MR0316227}.

\citet{Levin2013FI} pointed out that the relationships
between $\PA$ and algorithmically random reals,  in the sense of \citet{MR0223179}, 
are essential for understanding the limitations of obtaining solutions to arithmetical problems by probabilistic methods.
These relationships have been under investigation since the work of \citet{MR820784}.

We show that the $\PA$ degrees (Turing degrees of reals in $\PA$) 
have the {\em  join property} with respect to random degrees (Turing degrees of random reals).
In general, a degree $\mathbf{a}$ is said to have the
{\em join property} if, for every non-zero $\mathbf{b}<\mathbf{a}$, there exists 
$\mathbf{c}<\mathbf{a}$ with $\mathbf{b}\vee \mathbf{c}=\mathbf{a}$. 

The $\PA$ degrees do not, in general, satisfy the join property \citep{lewisjoinnote}.
In contrast, we show that for every $\PA$ degree $\mathbf{a}$, random 
$\mathbf{b}<\mathbf{a}$ there exists random
$\mathbf{c}<\mathbf{a}$ with $\mathbf{b}\vee \mathbf{c}=\mathbf{a}$. 
\begin{thm}\label{C1JiRsNKu}
If $z$ is $\PA$ and $x<_T z$ is random, there exists random $y<_T z$ such that $x\oplus y \equiv_T z$.
The same is true for truth-table and weak truth-table reducibility:  $<_{tt}, \equiv_{tt}$ and $<_{wtt}, \equiv_{wtt}$.
\end{thm}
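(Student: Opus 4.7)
The natural approach is a Kučera-style coding inside a $\Pi^0_1$ class of Martin-Löf randoms. Fix a universal Martin-Löf test $\{U_n\}$ with $\mu(U_1)\leq 1/2$ and let $C=2^\omega\setminus U_1$, a $\Pi^0_1$ class of randoms of measure $\geq 1/2$. I would build a computable tree $(C_\sigma)_{\sigma\in 2^{<\omega}}$ of $\Pi^0_1$ subclasses of $C$, with splitting levels $n_0<n_1<\cdots$ and $C_{\sigma b}=\{w\in C_\sigma:w(n_{|\sigma|})=b\}$, arranged so that each of $C_{\sigma 0},C_{\sigma 1}$ retains at least half the measure of $C_\sigma$. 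This guarantees $\bigcap_k C_{\tau\restr k}\neq\emptyset$ for every $\tau\in 2^\omega$.

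Next, take the $z$-computable sequence $\tau(k)=z(k)\oplus x(k)$ (well-defined since $x\leq_T z$), and build $y$ level by level: at stage $k$, use the PA property of $z$ to resolve the $\Pi^0_1$ query ``which finite extension of the current initial segment has an extension in $C_{\tau\restr(k+1)}$?'' and select such an extension. The resulting $y$ lies in $\bigcap_k C_{\tau\restr k}\subseteq C$, so $y$ is Martin-Löf random with $y\leq_T z$. Moreover $y(n_k)=\tau(k)$, so from $x\oplus y$ one reads $\tau(k)=y(n_k)$ and then $z(k)=\tau(k)\oplus x(k)$, giving $z\leq_T x\oplus y$; hence $x\oplus y\equiv_T z$. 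The branching levels $n_k$ are computable, so the reduction $z\leq x\oplus y$ has computable use and is truth-table; the reduction $y\leq z$ can likewise be made truth-table via standard bounded-use variants of Kučera coding, which is what yields the $\equiv_{tt}$ and $\equiv_{wtt}$ analogues.

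The main obstacle is $y<_T z$. Since $x\oplus y\equiv_T z$ and $y\leq_T z$, the condition $y<_T z$ is equivalent to $x\not\leq_T y$: otherwise $x\leq_T y\leq_T z$ would force $y\equiv_T x\oplus y\equiv_T z$. Intuitively, $y$ reveals only the XOR'd sequence $\tau=z\oplus x$ together with generic Kučera randomness, and $x$ cannot be recovered from $\tau$ alone; but making this rigorous requires care, because the measure-zero bad set $\{w:\Phi_k^w=x\}$ is naturally $\Pi^0_1(x)$ rather than $\Pi^0_1$, so avoiding it inside the construction must route through $x\leq_T z$ in the PA enumeration. The cleanest route is via a preliminary lemma stating that $z\in\PA$ together with $x\leq_T z$ Martin-Löf random implies that $z$ is PA relative to $x$; granted this, the whole construction can be performed inside a $\Pi^0_1(x)$ class of $x$-random reals, making $y$ Martin-Löf random relative to $x$, and then van Lambalgen's theorem yields $x\not\leq_T y$ immediately. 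I expect this preliminary lemma and its compatibility with the bounded-use (tt, wtt) variants to be where the real technical work lies.
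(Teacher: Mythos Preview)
Your core construction breaks at step 3. You need computable splitting levels $n_0<n_1<\cdots$ such that $C_\sigma=\{w\in C:w(n_i)=\sigma(i)\text{ for }i<|\sigma|\}$ is nonempty for \emph{every} $\sigma$; otherwise the PA selection at stage $k$ has nothing to select from when $\tau\restr(k+1)$ lands on an empty branch. But no computable $(n_k)$ can do this for a $\Pi^0_1$ class $C$ of randoms. If one existed, discard $x$ entirely and code $\tau=z$: you would obtain a random $y$ with $y(n_k)=z(k)$, hence $y\equiv_T z$, contradicting Stephan's theorem that an incomplete random never has PA degree. Your use of $x$ as an XOR mask does nothing for nonemptiness, since you still require $C_\sigma\neq\emptyset$ for \emph{all} $\sigma$, independently of $z$ and $x$. (The phrase ``each of $C_{\sigma0},C_{\sigma1}$ retains at least half the measure of $C_\sigma$'' is in any case impossible unless both are exactly half; but even the weaker requirement of nonemptiness fails.) This is precisely the obstruction the paper isolates in \S3.2.

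The paper's remedy is to use $x$ not as a mask but to \emph{define the partition}. From $x$ one reads off a partition system $(D^x_\sigma)$: for each $\tau$ at level $\ell_n$, the bits of $x$ specify how to split the $2^{m_n}$ extensions of $\tau$ at level $\ell_{n+1}$ into two equal halves. The key Lemma~\ref{lem:main} shows that if $x$ is $1$-random then, for all large $n$, both halves meet $P$ above every $\tau\in P$. The proof is a measure computation: the set of $x$ for which one half misses $P$ is bounded by a hypergeometric tail, and these bad sets assemble into a Martin-L\"of test. So the randomness of $x$ is exactly what buys the branching that a computable $(n_k)$ cannot. Once both halves are nonempty $\Pi^0_1$ sets, $z$'s PA selection encodes $z(n)$ by choosing from the half labelled $z(n)$, and decoding uses $x$ to recompute the partition and read off which half $y$ entered.

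Your fallback route does not rescue the argument. Even granting the lemma ``$z$ PA and $x\leq_T z$ random implies $z$ is PA$(x)$'', the same splitting obstruction recurs verbatim inside any $\Pi^0_1(x)$ class of $x$-randoms: an $x$-computable sequence of splitting levels with all $C^x_\sigma$ nonempty would again code $z$ into a single real and contradict Stephan relativised. And the lemma itself is not something you can take for granted: $z$ being PA gives selection from $\Pi^0_1$ classes, not from $\Pi^0_1(x)$ classes, and $x\leq_T z$ does not bridge that gap.
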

The universality of $\PA$ reals implies that they compute some random real. Hence Theorem \ref{C1JiRsNKu}
generalizes an older result by \citet{minpal}, which says that for each $\PA$ real $z$ there exist random $x,y$
such that $z\equiv_T x\oplus y$. The latter was used by \citet{moserdepth} to derive an interesting fact about
{\em logical depth}, invented by \citet{Bennett1988}
in order to quantify the hardness of obtaining useful information via probabilistic algorithms.

Intuitively, a real is {\em deep} if it cannot be produced by a randomized algorithm with non-trivial probability. The halting problem, 
for example, is deep.
Reals that are not deep are called {\em shallow}, and include the computable and the random reals.
Depth is not invariant with respect to Turing equivalence, but it is invariant with respect to computations with computable time-bound;
equivalently, deep reals are upward closed with respect to truth-table reductions.

\citet{moserdepth} showed that there are shallow $x,y$ such that $x\oplus y$ is deep. We obtain the following extension, using 
Theorem \ref{C1JiRsNKu}. A real $x$ is {\em $\ze$-dominated} 
(also known as {\em hyperimmune-free}) if every $x$-computable function is dominated by a computable function.
\begin{coro}\label{bZmBhYeMn}
Let $\DOM$ denote the class of $\ze$-dominated reals.
\begin{enumerate}[\hspace{0.3cm}(a)]
\item For each  random $x\in\DOM$,  there exists random $y\in\DOM$ such that $x\oplus y\in\DOM\cap \PA$.
\item For each random $x$, there exists random $y$ such that $x\oplus y$ is deep.
\end{enumerate}
\end{coro}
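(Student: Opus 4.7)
Both parts will follow from Theorem~\ref{C1JiRsNKu} applied to an appropriate $z$ of PA degree strictly above $x$; the extra property we want ($\DOM$-membership in (a), depth in (b)) will be engineered into $z$ and then transferred to $x\oplus y$ via the equivalence produced by the theorem.

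For part (a), the plan is to apply the Jockusch--Soare hyperimmune-free basis theorem, relativized to $x$, to the nonempty $\Pi^{0,x}_1$ class of reals computing a complete extension of arithmetic relativized to $x$. This yields a $z$ of PA degree with $z\geq_T x$ such that every $z$-computable function is dominated by an $x$-computable one; combined with $x\in\DOM$, this forces $z\in\DOM$. The strict inequality $z>_T x$ will follow because $x$ itself cannot be of PA degree: by Stephan's theorem a random real of PA degree computes $\zp$, and any hyperimmune-free real computing $\zp$ would dominate the settling-time of $\zp$ by a computable function, contradicting the non-computability of $\zp$. Theorem~\ref{C1JiRsNKu} then delivers random $y<_T z$ with $x\oplus y\equiv_T z$; downward closure of $\DOM$ under $\leq_T$ gives $y\in\DOM$, while $x\oplus y\equiv_T z$ transfers both hyperimmune-freeness and PA degree to $x\oplus y$.

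For part (b), set $z=x\oplus\zp$. Upward closure of PA degrees gives $z$ of PA degree, and $\zp\leq_{tt} z$ combined with the tt-upward closure of depth noted in the introduction makes $z$ deep. If $\zp\not\leq_{tt} x$, then $x<_{tt} z$, and the tt version of Theorem~\ref{C1JiRsNKu} produces random $y<_{tt} z$ with $x\oplus y\equiv_{tt} z$; then $\zp\leq_{tt} x\oplus y$ forces $x\oplus y$ to be deep. Otherwise $\zp\leq_{tt} x$, so $x$ is already deep, and any random $y$ yields $x\leq_{tt} x\oplus y$, hence $x\oplus y$ deep.

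I expect the main obstacle to be the simultaneous control in part~(a): the real $z$ must be PA, strictly above $x$, and hyperimmune-free. The relativized hyperimmune-free basis theorem handles the first and third properties, and Stephan's random-vs-PA dichotomy supplies the strict inequality. Part~(b) is then essentially routine bookkeeping, given the tt-invariance of depth and the fact that $\zp$ is deep.
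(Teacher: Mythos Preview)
Your approach is essentially the paper's. For (a), both proofs relativize the hyperimmune-free basis theorem to $x$ to obtain a $\ze$-dominated $z\geq_T x$ of $\PA$ degree, then invoke Theorem~\ref{C1JiRsNKu}; you are simply more explicit than the paper about the strict inequality $x<_T z$ (the paper leaves the appeal to Stephan's dichotomy implicit).

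For (b) there is a small mismatch worth flagging. The tt version of Theorem~\ref{C1JiRsNKu} is stated for $z\in\PA$, and its proof uses the selection property \eqref{zEnN2ZYwr6} via a \emph{tt}-reduction from $z$; merely having $\PA$ \emph{Turing} degree is not the right hypothesis. Your $z=x\oplus\zp$ does in fact tt-compute a $\PA$ real (for instance the leftmost $\{0,1\}$-valued $\DNC$ function is tt-below $\zp$), so the argument goes through, but ``upward closure of $\PA$ degrees'' is not the justification you want here. The paper sidesteps this by choosing $z$ to be a genuine $\PA$ real with $x\leq_{tt} z$: given any $x$ and any $\{0,1\}$-valued $\DNC$ function $p$, one codes $x$ into the positions $e_i$ where $\varphi_{e_i}(e_i)\uparrow$ and copies $p$ elsewhere, producing a $\PA$ real tt-above $x$. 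Finally, your case split is unnecessary: if $\zp\leq_{tt} x$ then $x$ would be deep by tt-upward closure, contradicting that random reals are shallow, so that branch is vacuous. With these cosmetic fixes your argument coincides with the paper's.
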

The state-of-the-art regarding random and $\PA$ reals is given in \S\ref{UxUy99Ez}, and the proofs of our results are
presented in \S\ref{pF9WGGxG8R}. We end with a brief discussion in \S\ref{YzKdhxVSuD}, which includes 
questions and suggestions on the same topic. 

\section{Background and state-of-the-art}\label{UxUy99Ez}
For an introductory survey on  $\PA$, random and $\ze$-dominated reals and degrees, look no further than \citep{DzhafarovPA}.
Let $\twome$ denote the set of infinite binary sequences and let $\twomel$ denote the set of binary strings. We also use $2^n$ to denote the set of
$n$-bit strings and $2^{\geq n}$ for the set of strings of length $\geq n$.

We review the aspects of these topics that are directly relevant to our result.

\subsection{Random and PA reals}\label{iDz5who56}
A real in $\PA$ need not compute the halting set,
and indeed can have  low degree of unsolvability.
Viewed as an infinite binary sequence, a $\PA$  {\em real} $z$ encodes a selection with
respect to every nonempty \pz subset of $\Nat$:
\begin{equation}\label{zEnN2ZYwr6}
\parbox{12cm}{there exists total $f\leq_{tt} z$ such that if $e$ is an effective description of a \pz set $D_e$ 
(as the set of solutions of a \pz predicate) either $D_e=\emptyset$ or $f(e)\in D_e$.}
\end{equation}
In other words, $z$ may not be able to tell if $D_e$ is empty, but it can choose a number which will be in $D_e$ unless $D_e$ is empty.
Similarly, a $\PA$ real computes a member in any given nonempty \pz subset of $\twome$, uniformly on the index of the class.

Another characterization is in terms of {\em diagonally noncomputable $\DNC$ functions}, namely functions $f$ with $\forall n,\ f(n)\not\simeq \varphi_e(e)$,
where $\varphi_e$ is a computable enumeration of all partial computable functions.
A Turing degree is $\PA$ iff it contains a boolean $\DNC$ function. In \S\ref{ChCj3ZFGWn} we will discuss {\em logical-depth} which is invariant up to
truth-table degrees, but not up to Turing degrees. It is therefore important to distinguish  {\em $\PA$ reals} from reals of {\em $\PA$ degree}.
One can think a $\PA$ real as a $\DNC$ function with binary range, or a real that effectively encodes choice-functions for \pz sets, in a truth-table way
(equivalently, the retrieval of information occurs within a fixed computable time-bound). 

A subclass of $\DNC$ degrees (Turing degrees that contain a $\DNC$ function) is the class of \ml random, also known as 1-random (or simply {\em random}), degrees. 
Let $\mu$ denote the standard Lebesgue measure on $\twome$. A uniformly \ce sequence $U_i\subseteq\twomel$, viewed as effectively open sets (namely the set $\dbra{U_i}$ of reals which have a prefix in $U_i$), with $\mu(U_i)\leq 2^{-i}$ is called a {\em \ml test}. A real $x$ is {\em random} if for each \ml test $(U_i)$ only finitely many members $U_i$ contain a prefix of $x$.
There exists a {\em universal \ml test}: a test $(U_i)$ such that for every $i$ and every \ml test $(V_j)$,  we have $\dbra{V_j}\subseteq \dbra{U_i}$ for all but finitely many $j$. Hence $\twome-\dbra{U_i}$ consists entirely of random reals, and 
the random reals are exactly the union of these sets with respect to $i$. 
These notions can be relativized with respect to any real $z$, where the test $(U_i)$ is only required to be \ce relative to $z$.
Randomness relative to the halting problem $\zp$ is called {\em 2-randomness}.

The complement of a member $U_i$ of a universal test can be viewed as a \pz class of positive measure,
consisting entirely of random reals. By the properties discussed above, every real of $\PA$ degree computes a random real. 

Further results connect  these classes of reals and their degrees:
\begin{enumerate}[\hspace{0.3cm}(i)]
\item every random real computes a $\DNC$ function \citep{MR820784}
\item every $\PA$ degree is the join of two random degrees \citep{minpal}
\item a random $x$ has $\PA$ degree iff $x\geq_T \zp$ \citep{MR2258713frank}
\end{enumerate}
and in (i), (ii) the reductions are truth-table.

\subsection{Kolmogorov complexity, PA and logical depth}\label{ChCj3ZFGWn}
The length of the shortest program for the universal Turing machine that outputs $\sigma$ is known as the
{\em Kolmogorov complexity} of $\sigma$, and is denoted by $C(\sigma)$. The same statement in terms of self-delimiting or \pf machines\footnote{Turing machines
with domain a \pf subset of $\twomel$.} defines the  {\em \pf Kolmogorov complexity} of $\sigma$,  denoted by $K(\sigma)$; it is a refined
information content measure  compared to $C(\sigma)$.
The \ml reals defined in \S\ref{iDz5who56} are exactly the reals $x$ whose prefixes cannot be compressed by more than a constant:
$\exists c\ \forall n\ K(x\restr_n)\geq n-c$.

The classes of $\PA$, $\DNC$ and Turing-hard (namely computing $\zp$) reals can be characterized in terms of Kolmogorov complexity 
\citep[\S 4]{KjosHanssenMStrans}:
\begin{itemize}
\item $x$ computes a $\DNC$ function iff $\exists f\leq_{T} x\ \forall n: C(f(n))\geq n$ 
\item $x$ truth-table computes a $\DNC$ function iff $\exists f\leq_{tt} x\ \forall n: C(f(n))\geq n$ 
\item $x$ computes a real in $\PA$ iff  $\exists f\leq_{tt} x\ \forall n, \sigma\in 2^n\ \parb{f(n)\in 2^n\wedga C(f(n))\geq C(\sigma)}$
\item $x\geq \zp \iff \exists f\leq_T x\ \forall n, \sigma\in 2^n\ \parb{f(n)\in 2^n\wedga K(f(n))\geq K(\sigma)}$
\end{itemize}
\citet{pamiller, relafrankjoh} showed that computing a $\PA$ real is equivalent to computing a  martingale  that 
majorizes the optimal c.e.\ supermartingale. These examples demonstrate the ubiquity of $\PA$ degrees in
computability and algorithmic information. 
 
Many of the above results about $\PA$ and random reals indicate a 
qualitative difference between the information
in an incomplete random real and the information in a $\PA$ real. The latter is highly structured and useful, as it allows to obtain a completion of
any partial computable predicate. In contrast, although the information content in random reals  is very large in terms of their Kolmogorov complexity,
this information appears unusable to a large degree.
This type of qualification of information was formalized by \citet{Bennett1988} through the notion of {\em logical-depth} which applies to strings and reals.
Several variants have been studied since, but as shown by \citet{MOSER201396},
all depth notions can be interpreted in the compression framework: 
\begin{equation*}
\parbox{11cm}{a sequence is {\em deep} if given  unlimited time, 
a compressor can compress the sequence $r(n)$ more bits than if given at most $t(n)$ time steps.}
\end{equation*}
By considering different time-bounds $t(n)$ and {\em depth-magnitudes} $r(n)$, all existing depth notions can be expressed in the compression framework.
\citet{Bennett1988} defined depth in terms of \pf Kolmogorov complexity, and one of his simplest formulations is
obtained by considering computable $t(n)$ and $r(n) = \bigon$. 
This was extended  by \citet{moserdepth}, and we use part of their terminology in the following
definition due to \citet{Bennett1988}.

A nondecreasing unbounded $g:\Nat\to\Nat$ is called an {\em order}.
Let $K^t$ denote the restriction of $K$ with respect to time-bound $t$.\footnote{Let $U_t$ 
denote the computations by the universal \pf machine up to step $t(n)$ for inputs of length $n$, and let
$K^t$ denote the \pf Kolmogorov complexity relative to $U_t$.}
\begin{defi}
A real $x$ is {\em $\bigon$-deep} if
for every computable time-bound $t$, every $c$ and almost all $n$, $K^t(x\restr_n)>K(x\restr_n)+c$.
A real $x$ is {\em order-deep} if there exists a computable order $g$ such that
for every computable time-bound $t$,  and almost all $n$, $K^t(x\restr_n)>K(x\restr_n)+g(n)$.
\end{defi}
Order-deep implies $\bigon$-deep.
The facts we present hold for both classes, so we simply talk of {\em deep reals} to refer to either notion.
A real that is not deep is called {\em shallow}.
Intuitively, deep reals are compressible but any computably time-bounded process can only produce a far from optimal compression. 

\citet{Bennett1988} and \citet{moserdepth} observed:
\begin{enumerate}[(i)]
\item computable and random reals are shallow 
\item the halting set and, in fact every $\PA$ real,  is deep 
\item if $x$ is deep then each $y\geq_{tt} x$ is deep
\item there are shallow $x,y$ such that  $x\oplus y$ is deep.
\end{enumerate}
Item (iv) involves finding random $x,y$ such that 
$x\oplus y$ is truth-table equivalent to a $\PA$ real.

\subsection{Join properties of PA and random degrees}\label{pY7jSdtn81}
Our result is a join property of $\PA$ degrees with respect to random degrees.
The {\em join property} is
one of the central algebraic properties in the study of the structure of the Turing degrees:
\begin{defi}
 A Turing degree $\mathbf{a}$ satisfies the 
\begin{itemize}
\item  {\em join property} if, for every non-zero $\mathbf{b}<\mathbf{a}$, there exists 
 $\mathbf{c}<\mathbf{a}$ with $\mathbf{b}\vee \mathbf{c}=\mathbf{a}$. 
\item {\em cupping property} if, for all $\mathbf{b}>\mathbf{a}$, there exists 
 $\mathbf{c}<\mathbf{b}$ with $\mathbf{a}\vee \mathbf{c}=\mathbf{b}$. 
\end{itemize}
\end{defi}
Does every $\PA$ degree have the join property? That was a question raised by \citet{MR820784}
and answered in the negative by
\citet{lewisjoinnote}.\footnote{\citet{MR820784} calls the degrees having the join property {\em cuppable}.}
Recall that $x$ is {\em low} if $x'\equiv_T\emptyset'$, 
namely the halting problem relative to $x$ is Turing-equivalent to the unrelativized halting set $\emptyset'$.
 The
\begin{itemize}
\item low $\DNC$ and low random degrees do not have the join property \citep{lewisjoinnote}
\item low $\PA$ degrees do not have the join property \citep{lewisjoinnote}
\item  2-random degrees have the join property  \citep{typical}
\item $\PA$ degrees have the cupping property \citep{MR820784}
\item  2-random degrees do not have the cupping property  \citep{typical}.
\end{itemize}
Theorem \ref{C1JiRsNKu} cannot be extended with respect to other 
standard algebraic properties studied in the Turing degrees, such as bounding a minimal pair, being the supremum of a minimal pair, 
or even having the meet or complementation property.\footnote{An informative table of these conditions can be found in \citep{typvol}.}  
It is known that:
\begin{enumerate}[\hspace{0.3cm}(i)]
\item any pair of $\DNC$ degrees below $\zp$ fails to be a minimal pair \citep{Kucera88zp}
\item every $\PA$ degree bounds a minimal pair \citep{jocksoarepimin}.
\end{enumerate}
Toward a strengthening of the join property with randoms shown in Theorem \ref{C1JiRsNKu}, one would consider the property that
reals in $\PA$ bound a minimal pair of randoms (before looking at being the join of a minimal pair of randoms).
However even this basic property fails  due to (i) and the fact that randoms are $\DNC$.
This failure contrasts with (ii).

\section{Random join property for PA degrees}\label{pF9WGGxG8R}
We prove Theorem \ref{C1JiRsNKu}:
if $z$ is $\PA$ and $x<_T z$ is random, there exists random $y<_T z$ such that $x\oplus y \equiv_T z$.
To this end, we need to code $z$ into $y$, in a way that $z$ is recoverable from $y$ with the help of oracle $x$.

\subsection{Coding into a random}\label{NgRyVYgtoa}
The coding we use is based on the standard coding method into randoms by \citet{gacsMR859105} and \citet{MR820784}.
The general form of this method is reviewed by \citet{codico} along with its limitations, some of which are relevant to our task.
The idea is to start with a \pz class of positive measure which contains only randoms.
A \pz class of reals is the set of paths $[P] \subseteq \twome$ of a \emph{\pz tree} $P$, 
 namely a \pz downward prefix-closed subset of $\twomel$, without dead-ends (every node in $P$ has a successor in $P$).
\citet{MR820784} showed that  
each $\tau\in P$ has a computable positive lower-bound on $\mu([P]\cap \dbra{\tau})$, where
$\dbra{\tau}:=\sqbrad{x\in[P]}{\tau\prec x}$. This implies that
there exists computable increasing $(\ell_n)$ such that
\begin{equation}\label{GuqLmDSoQO}
\parbox{9cm}{ each $\tau\in P\cap 2^{\ell_n}$ has at least two extensions in $\tau\in P\cap 2^{\ell_{n+1}}$.}
\end{equation}
\citet{codico} noted  that if $(\ell_n)$ is computable and increasing:
\begin{equation*}
\parbox{10cm}{\eqref{GuqLmDSoQO} holds for all sufficiently large $n$ iff $\sum_n 2^{-(\ell_{n+1}-\ell_n)}<\infty$.}
\end{equation*}
This method was used in the \KG theorem: every real $z$ is computable by a random $y$. The coding consists of choosing
$y\restr_{\ell_{n+1}}$ as the leftmost or rightmost extension of $y\restr_{\ell_n}$ in $P$, according to whether $z(n)$ is 0 or 1.
The decoding of $y$ into $z$ is possible due to the fact that $P$ has a computable monotone approximation.

Our method is based on this type of coding, except that we cannot let $y$ be locally on the 
boundary of $P$:\footnote{in the sense that $y\restr_{\ell_{n+1}}$ is the leftmost or rightmost $\ell_{n+1}$-bit extension of $y\restr_{\ell_{n}}$ in $P$}  
such reals are known to be Turing complete. 
Let $\mu_{\sigma}([P]):=2^{|\sigma|}\cdot \mu([P]\cap \dbra{\sigma})$.
\citet[Corollary 2.10]{codico} observed that if
\begin{equation}\label{TwurKAqQp}
\sum_{i} q_i<\infty\wedga \forall n,\ \ell_{n+1}>-\log q_n+\ell_n
\end{equation}
for positive computable rational and integer sequences $(q_n)$, $(\ell_n)$,  then
\begin{equation}\label{ZDuCOjbw6T}
\exists k_0\ \forall n\geq k_0\ \forall \sigma\in 2^{\ell_n}\cap P: \mu_{\sigma}([P])>q_n.
\end{equation}
For the remaining part of our article let $q_n:=1/(n+1)^2$ ,
fix $P$ to be a nonempty \pz tree without dead-ends, whose paths are all random, and 
assume that $(\ell_n)$ is computable  so
by  \eqref{ZDuCOjbw6T}: 
\begin{equation}\label{U3J23QRG5a}
\textrm{each $\tau\in P\cap 2^{\ell_n}$ has at least two extensions in $\tau\in P\cap 2^{\ell_{n+1}}$.}
\end{equation}
as long as $(\ell_n)$ satisfies the latter part of \eqref{TwurKAqQp}, \eg  $\ell_n=2^n$.

\subsection{Outline of the proof}\label{cLL5rWHAZ}
Given  $z\in \PA$ and random $x<_T z$ we will find $y<_T z$ inside $[P]$ such that $x\oplus y\equiv_T z$.

We will use property \eqref{zEnN2ZYwr6} of the $\PA$ reals $z$: 
\begin{equation}\label{RgCaBDZUpT}
\parbox{10cm}{given a nonempty \pz set $Q\subseteq\twomel$ we can 
$z$-effectively choose a member of $Q$, uniformly in any \pz index of $Q$.}
\end{equation}
Let $\ell=(\ell_n)$ be exponentially growing (\eg $\ell_0=0$, $\ell_n=2^n$, $n>0$)  and let
\[
[\tau]:=\sqbrad{\tau'}{\tau\preceq\tau'}.
\]
Using \eqref{RgCaBDZUpT} we can define initial segments $y\restr_{\ell_n}$ so that $y\leq_T z$ and $y\in [P]$.

A naive approach to this task is: 
at step $n$, when we choose the extension of $y\restr_{\ell_n}$
in $P\cap 2^{\ell_{n+1}}$ to choose one with even/odd number of 1s,  in order to encode $z(n)=0,1$ respectively (or, in general, use some
{\em effective partition} $F_{0}, F_{1}$  of $[y\restr_{\ell_n}]\cap  2^{\ell_{n+1}}$). 

The problem here is that at some step $n$, 
\begin{equation}\label{EZ7HNWGy2l}
\parbox{11cm}{the set of extensions of $y\restr_{\ell_n}$ in 
$P\cap 2^{\ell_{n+1}}$ may not contain representatives of both classes in the partition: 
$F_0\cap P=\emptyset\ \vee\ F_1\cap P=\emptyset$}
\end{equation}
which would force us to encode wrong information about $z$ in $y$.
This could happen since $P$ contains  random $y\geq_T \emptyset'$ and, by \citet[Theorem 3.2]{BienvenuHMN14},
$\inf_{\sigma\prec z} \mu_{\sigma}(P)=0$; in fact,  the proof of this characterization of Turing-hard random reals
can be easily extended in order to show that the density $\mu_{\sigma}(P)$ along $y\geq_T\emptyset'$ drops at the expense of any
effective partition $F_{0}, F_{1}$, making $F_i\cap P=\emptyset$ for some $i<2$. 

Hence the partition of the extensions of $y\restr_{\ell_n}$  in $P\cap 2^{\ell_{n+1}}$
cannot be fully effective, and this is where the availability of $x$ plays a role: 
it suffices to show that every random $x$ uniformly defines a partition function, such that
\eqref{EZ7HNWGy2l} does not happen, for sufficiently large $n$.
We do this in \S\ref{fzEp2Lp1eI}: after uniformly encoding all such partition systems into reals $x$ in \S\ref{lWAmAPDqRz},  
we show that the effective probability of the 
partition systems satisfying \eqref{EZ7HNWGy2l} tends to 0 as $n$ tends to infinity. In other words, if $x$ is random, 
for sufficiently large $n$ the partition system defined by $x$ fails \eqref{EZ7HNWGy2l}.
The coding of $z$ into $y$ can then be done as in the naive construction, but with the use of the partitions defined by $x$.
By the construction, $x\oplus y\equiv_T z$.

\subsection{Partition systems}\label{lWAmAPDqRz}
Let $\ell=(\ell_n)$ be computable and increasing with $\ell_0=0$, and  $m_n:=\ell_{n+1}-\ell_n$.
An {\em $\ell$-partition system} is a perfect tree whose $n$-level nodes $D_{\sigma}, \sigma\in 2^n$ are pairwise disjoint
subsets of $2^{\ell_n}$ of equal size, whose union is $2^{\ell_n}$ and which are monotone with respect to $\preceq$.
You can think of an $\ell$-partition system as an interative 
process which partitions the $\ell_{n+1}$-bit extensions of each $\tau\in 2^{\ell_n}$
into two equal-sized sets. 
Given $\tau\in 2^{\ell_n}$ there exists a unique $\sigma\in 2^n$ such that $\tau\in D_{\sigma}$ and which gives
the partition of $[\tau]\cap 2^{\ell_{n+1}}$ into
$[\tau]\cap D_{\sigma\ast 0}$, $[\tau]\cap D_{\sigma\ast 1}$.
Formally:
\begin{defi}
An {\em $\ell$-partition system} is a map $\sigma\mapsto D_{\sigma}\subseteq 2^{\ell_{n}}$, where $n = |\sigma|$, such that:
\begin{itemize}
\item $D_{\sigma\ast 0}\cap D_{\sigma\ast 1}=\emptyset$
and each string in $D_{\sigma\ast 0}, D_{\sigma\ast 1}$ has a prefix in $D_{\sigma}$,
\item $\tau\in D_{\sigma}\ \Rightarrow\ [\tau]\cap \parb{D_{\sigma\ast 0}\cup D_{\sigma\ast 1}}= [\tau]\cap 2^{\ell_{n+1}}$,
\item $\tau\in D_{\sigma}\ \Rightarrow\ \abs{[\tau]\cap D_{\sigma\ast 0}}=\abs{[\tau]\cap D_{\sigma\ast 1}}= 2^{m_n-1}$.
\end{itemize}
For simplicity, given a partition-system $\sigma\mapsto D_{\sigma}$ we:
\begin{itemize}
\item call the restriction of $\sigma\mapsto D_{\sigma}$ to $2^{\leq n}$ a {\em partition-system of height $n$}
\item fix $\ell_n:=2^n$ and refer simply to {\em partition-systems}\footnote{$\ell_n$ is the length of the prefix of $x$ needed for  the computation of $z$ from $x,y$; later we will see that setting $\ell_n=n\cdot \ceil{\log n}$ suffices for our result.}
\end{itemize}
Let $\BB$ be the class of partition-systems and
$\BB_n$ the class of partition-systems of height $n$.
\end{defi}
For each $\tau\in 2^{\ell_n}$ there exists a unique $\sigma\in 2^n$ such that $\tau\in D_{\sigma}$.

Note that $D_{\sigma}, \sigma\in 2^n$ is itself a partition of $2^{\ell_n}$ into $2^n$ parts.
For $\sigma\in 2^n, \tau\in D_{\sigma}$ we are interested in 
the partition of $[\tau]\cap 2^{\ell_{n+1}}$ into $D_{\sigma\ast 0}\cap [\tau], D_{\sigma\ast 1}\cap [\tau]$.
So $(D_{\sigma\ast 0}, D_{\sigma\ast 1})$ can be viewed as the group of partitions 
$(D_{\sigma\ast 0}\cap [\tau], D_{\sigma\ast 1}\cap [\tau])$, $\tau\in D_{\sigma}$.\footnote{This is why in many expressions we need to include $[\tau]$: to specify that we refer to the split of the $\ell_{n+1}$-bit extensions of $\tau$ that
the partition-system defines.}
\begin{defi}[Codes for partitions]
Fix a computable increasing $(u_n)$ 
and an effective naming $f$ of partition-systems of finite height by strings such that:
\begin{itemize}
\item each $\sigma\in 2^{u_n}$ corresponds to a unique partition-system $f(\sigma)\in \BB_n$
\item each extension of $\sigma$ in $2^{u_{n+1}}$ corresponds to an  extension of $f(\sigma)$ in $\BB_{n+1}$.
\end{itemize}
We get an effective naming of $\BB$ by reals: the computable surjection $f:\twome\to \BB$ given by
$f(x):=\lim_n f(x\restr_{u_n})$.
Let 
\begin{itemize}
\item $\sigma\mapsto D^x_{\sigma}$ denote the partition-system $f(x)$
\item $\sigma\mapsto D^{\eta}_{\sigma}$ denote the partition-system $f(\eta)$, when $\eta\in 2^{u_n}$ for some $n$.
\end{itemize}
\end{defi}
Each real computes the height-$n$  prefix of its corresponding member of $\BB$ with computable oracle-use $u_n$.
Recall $q_n:=1/(n+1)^2$ from \S\ref{NgRyVYgtoa} and that $\ell_n:=2^n$, $m_n:=\ell_{n+1}-\ell_n$. Hence: 
\begin{equation}\label{1hlkq7rPp6}
2^{\ell_n+1}\cdot 2^{-q_n^2\cdot 2^{m_n}}< 2^{-n}
\hspace{0.3cm}\textrm{for sufficiently large $n$}
\end{equation}
which we will use along with \eqref{U3J23QRG5a}.\footnote{In the end of \S\ref{fzEp2Lp1eI} we will see that setting $\ell_n=n\cdot \ceil{\log n}$ suffices for our results.}

{\em Remark.} For a later discussion on the oracle-uses of our computations it is interesting to estimate the size of $u_n$, which can be done by
estimating $\abs{\BB_n}$: 
\begin{equation*}
\parbox{11cm}{for $\ell_n=2^n$ we get $u_n\leq 2^{2^{2^n}}$ and for $\ell_n=n\cdot \log n$ we get $u_n\leq 2^{2^{n^2}}$}
\end{equation*}
which are straightforward, and overwhelming compared to $\ell_n$.

\subsection{Counting the failed partition systems}\label{fzEp2Lp1eI}
Recall  that $[\tau]$ denotes the set of extensions of $\tau$ in $\twomel$.
We formalize the failure discussed in \S\ref{cLL5rWHAZ}, in terms of condition 
\eqref{EZ7HNWGy2l} that we try to avoid in partition systems.
\begin{defi}[Failure]
Given  $\tau\in 2^{\ell_n}$, $\eta\in 2^{u_{n+1}}$, $x\succ\eta$  and  the corresponding $(D^{\eta}_{\sigma})$:
\begin{itemize}
\item let $\sigma^{\eta}_{\tau}$ denote the unique $\sigma\in 2^n$  
such that $\tau\in D^{\eta}_{\sigma}$ and $\sigma^{x}_{\tau}:=\sigma^{\eta}_{\tau}$
\item we simplify the expression $D^{\eta}_{\sigma^{\eta}_{\tau}}$ into $D^{\eta}_{\sigma_{\tau}}$
and $D^{x}_{\sigma^{x}_{\tau}}$ into $D^{x}_{\sigma_{\tau}}$
\item say that 
system $(D^{\eta}_{\sigma})$ {\em fails at $\tau$} if 
$[\tau]\cap D^{\eta}_{\sigma_{\tau}\ast 0}\cap P=\emptyset \vee [\tau]\cap D^{\eta}_{\sigma_{\tau}\ast 1}\cap P=\emptyset$.
\end{itemize}
Given $x\succ\eta$ we say that $(D^{x}_{\sigma})$ {\em fails at $\tau$} if $(D^{\eta}_{\sigma})$  fails at $\tau$. 
\end{defi}
We stress that $D^{\eta}_{\sigma_{\tau}}$ is not necessarily a subset of $D^{\eta}_{\sigma_{\tau}}\cap [\tau]$: 
distinct $\tau,\tau'$ may have $\sigma^{\eta}_{\tau}=\sigma^{\eta}_{\tau'}$, hence
$D^{\eta}_{\sigma_{\tau}}=D^{\eta}_{\sigma_{\tau'}}$ and $D^{\eta}_{\sigma_{\tau}\ast i}, i<2$ contains the partitions of both
$[\tau]\cap 2^{\ell_{n+1}}$ and $[\tau']\cap 2^{\ell_{n+1}}$.

\begin{defi}[Fail-sets]
The set partition-systems that fail at $\tau\in 2^{\ell_n}$ is $G^0_{\tau}\cup G^1_{\tau}$ where: 
\[
G^i_{\tau}:=\sqbrad{\eta\in 2^{u_{n+1}}}{[\tau] \cap P \cap D^{\eta}_{\sigma_{\tau}\ast i} = \emptyset}
\hspace{0.2cm}\textrm{so}\hspace{0.2cm}
\dbra{G^i_{\tau}}=\sqbrad{x}{[\tau] \cap P \cap D^x_{\sigma_{\tau}\ast i} = \emptyset}.
\]
for $i<2$, and where we identify partition-systems  with their codes.
\end{defi}
By compactness, the $G^i_{\tau}$ are \ce uniformly in $i,\tau$. 

We  bound the probability of failure at $\tau$.
Recall the $P$-density bound $q_n$ from \S\ref{cLL5rWHAZ} and its property \eqref{U3J23QRG5a}: 
for sufficiently large $n$, each $\tau\in 2^{\ell_n}$ has $\geq q_n\cdot 2^{m_n}$ successors in $2^{\ell_{n+1}}\cap P$.
\begin{lem}\label{HLihTBIfHb}
For sufficiently large $n$, if $\tau\in 2^{\ell_n}\cap P, i<2$ then $\mu(G^i_{\tau})\leq 2^{-q_n^2\cdot  2^{m_n}}$.
\end{lem}\begin{proof}
The failure of a partition-system $(D^x_{\sigma})$  at $\tau$ depends only on $\eta:=x\restr_{u_{n+1}}$, and in particular on whether this partition
splits the extensions of $\tau$ in $P\cap 2^{\ell_{n+1}}$  into two nonempty parts. 
This is precisely modeled by the hypergeometric distribution: randomly drawing a sample of $k=2^{m_n-1}$ 
balls/strings from a box containing $N=2^{m_n}$ balls/strings, of which 
$R\geq q_n\cdot 2^{m_n}$ are black (the ones  inside $P$, and are the {\em successful draws}). 

\citet{Hoeffdinghyp} showed that in such a sample
\begin{equation*}
\parbox{10cm}{the probability that no successful draws occur is $\leq e^{-2(R/N)^2 k}$.}
\end{equation*}
Since $R/N\geq q_n$, the proportion of strings in $2^{u_{n+1}}$ that belong to $G^i_{\tau}$ has the same bound:
\[
\abs{G^i_{\tau}}\leq 2^{u_{n+1}}\cdot e^{-2q_n^2\cdot  2^{m_n-1}}
\hspace{0.3cm}\Rightarrow\hspace{0.3cm}
\mu(G^i_{\tau})< 2^{-q_n^2\cdot  2^{m_n}}.
\]
\end{proof}
An alternative route toward Lemma \ref{HLihTBIfHb} is to use the explicit expression:
\[
{N-R\choose k}\ \Big/\ {N\choose k}
\]
for the probability that a random sample of size $k$ from a population of size $N$ with $R$ successful members, contains no successful member.
A third way is to consider the probability that $k$  independent trials with probability of success
remaining above $R/N$ all fail.
\begin{lem}\label{lem:main}
If $x$ is random, there exists $n_0$ such that
\[
\parB{\sigma\in 2^{\geq n_0}\wedga \tau\in P\cap D^x_{\sigma}}
\ \ \Rightarrow\ \ 
\parB{[\tau]\cap P\cap D^x_{\sigma\ast 0}\neq\emptyset\wedga 
[\tau]\cap P\cap D^x_{\sigma\ast 1}\neq\emptyset}.
\]
\end{lem}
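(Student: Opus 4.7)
The plan is to capture all ``bad'' $x$ in a Martin--L\"{o}f test $(V_n)_n$ and then to extract the promised threshold $n_0$ from the 1-randomness of $x$. Let $V_n\subseteq\twome$ consist of those $x$ for which there exist $\sigma\in 2^n$ and $\tau\in P\cap D^x_\sigma$ with
\[
[\tau]\cap P\cap D^x_{\sigma\ast 0}=\emptyset\ \text{or}\ [\tau]\cap P\cap D^x_{\sigma\ast 1}=\emptyset.
\]
Both $\tau\in D^x_\sigma$ and the sets $D^x_{\sigma\ast 0},D^x_{\sigma\ast 1}$ depend only on $x\restr_{u_{n+1}}$, a clopen condition. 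Moreover, ``$\tau'\notin P$'' is $\Sigma^0_1$ since $P$ is $\Pi^0_1$, and at level $n$ only finitely many choices of $\sigma$, $\tau$ and $\tau'$ are relevant, so $V_n$ is uniformly c.e.\ open.

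The main computation is the bound $\mu(V_n)<2^{-n}$ for all large $n$. Fix $\tau\in P\cap 2^{\ell_n}$ and set $N_\tau:=|[\tau]\cap P\cap 2^{\ell_{n+1}}|$. From $\mu_\tau([P])\leq N_\tau\cdot 2^{-m_n}$ together with \eqref{ZDuCOjbw6T} we have $N_\tau>q_n\cdot 2^{m_n}$ for sufficiently large $n$. The key structural fact is that the naming $f$ can be (and should be) set up so that, under uniform $x$, the induced split of the $2^{m_n}$ successors of $\tau$ at level $\ell_{n+1}$ into $[\tau]\cap D^x_{\sigma\ast 0}$ and $[\tau]\cap D^x_{\sigma\ast 1}$ is a uniformly chosen balanced bipartition. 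Under this hypothesis, the probability that all $N_\tau$ ``live'' successors of $\tau$ land in one fixed half of size $2^{m_n-1}$ equals
\[
\prod_{i=0}^{N_\tau-1}\frac{2^{m_n-1}-i}{2^{m_n}-i}\ \leq\ 2^{-N_\tau},
\]
each factor being at most $1/2$. Doubling to account for both halves, and summing over the $\leq 2^{\ell_n}$ candidates $\tau\in P\cap 2^{\ell_n}$, yields
\[
\mu(V_n)\ \leq\ 2^{\ell_n+1}\cdot 2^{-q_n 2^{m_n}}\ <\ 2^{-n},
\]
the last step invoking \eqref{1hlkq7rPp6} together with $q_n^2\leq q_n$.

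Since $(V_n)_n$ is a Martin--L\"{o}f test, 1-randomness of $x$ gives an $n_0$ with $x\notin V_n$ for every $n\geq n_0$, which is exactly the statement of the lemma. The subtlest point, and the obstacle I expect to spend the most care on, is the structural claim used above: one must verify, possibly by refining $f$ if the initial definition does not already guarantee it, that the local bipartition at each $\tau\in 2^{\ell_n}$ really is uniform under random $x$. A natural way to secure this is to dedicate disjoint blocks of $x\restr_{[u_n,u_{n+1})}$ to the different $\tau\in 2^{\ell_n}$, using each block in a measure-preserving way to pick among the $\binom{2^{m_n}}{2^{m_n-1}}$ balanced bipartitions of the successors of the corresponding $\tau$; any small non-uniformity coming from $\binom{2^{m_n}}{2^{m_n-1}}$ not being a power of two is absorbed by the large slack in \eqref{1hlkq7rPp6}.
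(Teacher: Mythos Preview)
Your overall strategy matches the paper's: package the bad events into a Martin--L\"of test and invoke 1-randomness of $x$. Your elementary tail estimate $\prod_{i<N_\tau}\frac{2^{m_n-1}-i}{2^{m_n}-i}\leq 2^{-N_\tau}$ is in fact cleaner than the paper's appeal to the hypergeometric Hoeffding bound (and yields the sharper exponent $q_n$ rather than $q_n^2$, so the use of \eqref{1hlkq7rPp6} via $q_n^2\leq q_n$ is fine).

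There is, however, a real gap in your claim that $V_n$ is uniformly c.e.\ open. Your definition of $V_n$ requires the witness $\tau$ to lie in $P$, and ``$\tau\in P$'' is a $\Pi^0_1$ condition; your paragraph on effectiveness only handles the clopen dependence on $x\restr_{u_{n+1}}$ and the $\Sigma^0_1$ condition ``$\tau'\notin P$'', and silently passes over the $\Pi^0_1$ conjunct. Nor can you simply delete the clause $\tau\in P$: for $\tau\in 2^{\ell_n}\setminus P$ one eventually has $[\tau]\cap P\cap 2^{\ell_{n+1}}=\emptyset$, so \emph{every} $x$ becomes bad at such $\tau$, and your measure bound $\mu(V_n)\leq 2^{\ell_n+1}\cdot 2^{-q_n2^{m_n}}$ would collapse. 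The paper resolves this with a truncation: for \emph{every} $\tau\in 2^{\ell_n}$ it enumerates the bad $x$'s only until the enumerated measure reaches the a priori bound, and then halts. The truncated sets $L_\tau, R_\tau$ are uniformly c.e.\ open with the right measure, and for $\tau\in P$ the enumeration never reaches the cutoff, so nothing is lost. This device (or an equivalent one) is what your argument is missing; once you insert it, the rest of your proof goes through.
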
\begin{proof}
By compactness, the $G^i_{\tau}$ are \ce uniformly in $i,\tau$, and by
Lemma \ref{HLihTBIfHb}:
\begin{equation}\label{8p57JRsuDg}
\tau\in 2^{\ell_n}\cap P
\hspace{0.3cm}\Rightarrow\hspace{0.3cm}
\mu(G^i_{\tau})< 2^{-q_n^2\cdot 2^{m_n}}
\end{equation}
for sufficiently large $n$.
We are almost ready to define the required \ml test for the proof of the lemma, except that the bound in $\eqref{8p57JRsuDg}$ is conditional on 
$\tau\in 2^{\ell_n}\cap P$. Fortunately, the latter is a \pz condition, so we may effectively restrict $G^0_{\tau}, G^1_{\tau}$ into
\ce sets $L_{\tau}, R_{\tau}$ which meet the bound of $\eqref{8p57JRsuDg}$ and {\em if} $\tau\in 2^{\ell_n}\cap P$ then they equal  
$G^0_{\tau}, G^1_{\tau}$ respectively.

Considering the effective enumerations $G^0_{\tau}(s), G^1_{\tau}(s)$, of $G^0_{\tau}, G^1_{\tau}$ let
$G^0_{\tau}(\infty):=G^0_{\tau}$ and let:
\begin{itemize}
\item   $t$ be the  least such that $\mu(G^0_{\tau}(t+1))\geq 2^{-q_n^2\cdot 2^{m_n}}$; $t:=\infty$ if such stage does not exist
\item $L_{\tau}:=G^0_{\tau}(t)$, and define $R_{\tau}$ similarly, with respect to $G^1_{\tau}$.
\end{itemize}
Letting $G_{\tau}:=L_{\tau}\cup R_{\tau}$, the  sets 
$L_{\tau}, R_{\tau}, G_{\tau}$ are uniformly \ce and by \eqref{8p57JRsuDg}:
\begin{enumerate}[\hspace{0.3cm}(a)]
\item $\tau\in 2^{\ell_n}\cap P\ \Rightarrow\  \parb{L_{\tau}=G^0_{\tau}\wedga R_{\tau}=G^1_{\tau}}$, for sufficiently large $n$
\item $\tau\in 2^{\ell_n}\ \Rightarrow\ 
\mu(G_{\tau})\leq \mu(L_{\tau})+\mu(R_{\tau})<  2\cdot 2^{-q_n^2\cdot 2^{m_n}}$.
\end{enumerate}
Hence $\mu(G_{\tau})< 2\cdot 2^{-q_n^2\cdot 2^{m_n}}$ for all $\tau\in 2^{\ell_n}$ and all $n$. Letting 
$G_n:=\cup_{\tau\in 2^{\ell_n}} G_{\tau}$ we get:
\[
\mu(G_{n})\leq  \sum_{\tau\in 2^{\ell_n}} \mu(G_{\tau})\leq
2^{\ell_n+1}\cdot 2^{-q_n^2\cdot 2^{m_n}}< 2^{-n} 
\]
where we used \eqref{1hlkq7rPp6}, and the $G_{n}$ are \ce uniformly in $n$.

Hence $(G_{n})$ is a \ml test
and if $x$ is random, $\exists n_0 \ \forall n\geq n_0: x\not\in G_n$.

Finally, by clause (a) above and the discussion in the first part of this section, 
for sufficiently large $n$ the set $G_n$ contains the partition-systems that fail at some 
$\tau\in P\cap 2^{\ell_n}$. Hence if $x$ is random, it will stop failing from some level $n_0$ on, which completes the proof.
\end{proof}

{\em Remark.} 
The choice $\ell_n=2^n$ that we made above \eqref{1hlkq7rPp6} is far from optimal, and
$\ell_n=n\ceil{\log n}$ suffices for Lemma \ref{lem:main}. Indeed, it suffices to ensure that $\sum_n \mu(G_{n})<\infty$,
so it suffices that $\mu(G_{n})\leq 1/n^2$. 
By \eqref{8p57JRsuDg}, it suffices that $m_n:=\ell_{n+1}-\ell_n>5\log n$. Since 
\[
\sum_{i<n} 5 \log i \leq 5 \log n! < n\log n
\]
for sufficiently large $n$, it suffices to take $\ell_n=n\cdot \ceil{\log n}$. 
This choice also satisfies the latter part of \eqref{TwurKAqQp}, which was a previous commitment. 

\subsection{Remaining proof for Theorem \ref{C1JiRsNKu} and Corollary \ref{bZmBhYeMn}}
We are now ready to finish the proof: let $z$ be $\PA$ and let 
$x<_T z$ be random. 

We will construct random $y<_T z$ such that $x\oplus y \equiv_T z$.
Since $x$ is random, there exists $n_0$ such that for each $\sigma\in 2^{\geq n_0}$:
\begin{equation}\label{TO6TQdJKG}
\tau\in P\cap D^x_{\sigma}
\ \ \Rightarrow\ \ 
\parB{[\tau]\cap P\cap D^x_{\sigma\ast 0}\neq\emptyset\wedga 
[\tau]\cap P\cap D^x_{\sigma\ast 1}\neq\emptyset}.
\end{equation}
We define the required $y\leq_T z$ as follows, starting from some $\sigma_0\in 2^{n_0}$
and some $\tau_0\in P\cap D^x_{\sigma_0}$. By \eqref{TO6TQdJKG} we may use $z$ to choose
$\tau_1\in [\tau_0]\cap P\cap D^x_{\sigma_0\ast z(0)}$, also letting $\sigma_1:=\sigma_0\ast  z(0)$. By the same token, we then choose 
$\tau_2\in [\tau_1]\cap P\cap D^x_{\sigma_1\ast z(1)}$,
also letting $\sigma_2:=\sigma_1\ast  z(1)$
and so on. 

Then $(\tau_i)$ is well-defined and $z$-computable since
\begin{equation*}
\parbox{9cm}{for $j<2$ the sets $[\tau_i]\cap P\cap D^x_{\sigma\ast j}$ are nonempty and  \pz }
\end{equation*}
and each can be obtained uniformly using a finite prefix of $x$.

Let $y:=\cup_n \tau_n$ and it remains to show that $z\leq_T x\oplus y$.

Starting from $\tau_0$, $\sigma_0$, we may use $x$  to compute $D_{\sigma_0\ast j}^x, j<2$, 
and let $z(0)=i$ for the unique $i$ such that $y$ has a prefix in $D_{\sigma_0\ast i}$.
In general, given $\tau_k, \sigma_k$ we
\begin{itemize}
\item use $x$  to compute $D_{\sigma_k\ast j}^x, j<2$
\item let $z(k)=i$ for the unique $i$ such that $y$ has a prefix in $D_{\sigma_k\ast i}$.
\end{itemize}
This shows that $z\leq_T x\oplus y$.

By the canonical coding of partition systems in \S\ref{lWAmAPDqRz},
$x$ truth-table computes its partition system $D^x_{\tau}$. In the above reduction
$z\leq_T x\oplus y$, the computation of $z$ is via a truth-table on 
the system $(D^x_{\tau})$ and $y$. Hence if $x<_{tt} z$, we get $z\equiv_{tt} x\oplus y$,
and a similar statement holds for weak truth-table reductions.

{\bf Proof of Corollary \ref{bZmBhYeMn}.}
Clause (a) of Corollary \ref{bZmBhYeMn} follows from Theorem \ref{C1JiRsNKu} and the 
relativization of the $\ze$-dominated basis theorem: every nonempty \pz class contains a $\ze$-dominated real.
Recall that the $\PA$ reals, viewed as diagonally non-computable binary function, form a \pz class
which is universal in the sense that every infinite path can compute some path through every \pz class, uniformly in its index \citep{MR0316227}.
This class can be relativized with respect to any real $x$, obtaining a
$\Pi^0_1(x)$ class $P^x$ with the property that all $y\in P^x$ have 
$y\geq_T x$ and for each  nonempty $Q\in \Pi^0_1(x)$ there exists $w\leq_T y$, $w\in Q$.
\begin{lem}\label{ahnFgRuKo}
If $x$ is $\ze$-dominated, there exists a $\ze$-dominated $z\geq_T x$ of $\PA$ degree.
\end{lem}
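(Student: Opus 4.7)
The plan is to combine the standard relativized hyperimmune-free basis theorem with the upward closure of $\PA$ degrees. Concretely, form the class
\[
\mathcal{C} := \sqbrad{x\oplus w}{w\in\PA},
\]
which is a nonempty $\Pi^{0,x}_1$ subclass of $\twome$, since $\PA$ is a $\pz$ class and joining with a fixed $x$ is an $x$-effective operation.

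Next, I would invoke the relativization of the $\ze$-dominated (hyperimmune-free) basis theorem to $x$: every nonempty $\Pi^{0,x}_1$ class contains a member $z$ such that every $z$-computable function is dominated by an $x$-computable function. Applying this to $\mathcal{C}$, we obtain $z = x\oplus w$ for some $w\in\PA$ with this relative domination property. Then $z\geq_T x$ trivially, and $z\geq_T w$ where $w$ is $\PA$; since the class of $\PA$ degrees is upward closed in the Turing degrees, $z$ itself is of $\PA$ degree.

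It remains to check that $z$ is $\ze$-dominated in the absolute sense. Given any $z$-computable function $g$, the relative basis theorem supplies an $x$-computable function $h$ with $g(n)\leq h(n)$ for all $n$. Since $x$ is $\ze$-dominated by hypothesis, there is a computable $h^{\ast}$ with $h(n)\leq h^{\ast}(n)$ for all $n$, and hence $g(n)\leq h^{\ast}(n)$. Thus every $z$-computable function is dominated by a computable function, so $z$ is $\ze$-dominated, as required.

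The argument is essentially a transitivity observation, so there is no real obstacle; the only point worth handling carefully is the verification that $\mathcal{C}$ is genuinely $\Pi^{0,x}_1$ (so that relativizing the basis theorem to $x$ is legitimate), and that the relativized hyperimmune-free basis theorem yields domination by an $x$-computable—not merely $z$-computable—function, which is precisely what lets the hypothesis on $x$ kick in.
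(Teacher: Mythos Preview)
Your proof is correct and follows essentially the same approach as the paper: apply the relativized hyperimmune-free basis theorem to a nonempty $\Pi^{0,x}_1$ class all of whose members are $\geq_T x$ and of $\PA$ degree, then use transitivity of domination. The only cosmetic difference is that the paper picks the universal $\Pi^0_1(x)$ class $P^x$ (whose members automatically compute $x$ and a $\PA$ real), whereas you use $\{x\oplus w : w\in\PA\}$ together with upward closure of $\PA$ degrees; both choices serve the same purpose.
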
\begin{proof}
By the definition $P^x$,  each $y\in P^x$ has $\PA$ degree.
By the $\ze$-dominated basis theorem relativized to $x$, there exists $x$-dominated $z\in P^x$, so every $f\leq_T z$
is dominated by some $g\leq_T x$. 
Hence $z\geq_T x$ is $\ze$-dominated and of $\PA$ degree.
\end{proof}
Any $x\in \twome$ is truth-table reducible to a $\PA$ real. Hence
clause (b) of Corollary \ref{bZmBhYeMn} follows from
the truth-table version of Theorem \ref{C1JiRsNKu} and
the fact by \citet{moserdepth} that all reals in the truth-table degree of a $\PA$ real are deep.

{\em Remark.} In the computation of $z$ from $x, y$  that we obtained,  the oracle-use of $x$ is $u_n$ (from the coding of partitions systems into reals that we fixed  in \S\ref{lWAmAPDqRz}) and the oracle-use of $y$ is $\ell_n$. We estimated $u_n$ to be more than double-exponential in $n$ (the length the prefix of $z$ computed)
while in the end of \S\ref{fzEp2Lp1eI} we explained that $\ell_n:=n\cdot \ceil{\log n}$ is a valid choice in our argument.

\section{Conclusion and discussion}\label{YzKdhxVSuD}
We showed that the complete extensions of  arithmetic in the degrees of unsolvability (Turing, truth-table, weak truth-table)
have the  join property with respect to random reals. In \S\ref{pY7jSdtn81} we explained that known results do not allow
any obvious strengthening of this result, in terms of complementation or even a meet property with randoms. 

Regarding the reductions of random reals and reals in $\PA$, we would like to know how tight can the 
relevant oracle-use be: what is the
\begin{enumerate}[\hspace{0.3cm}(i)]
\item  oracle-use required  in a reduction of a random real to a $\PA$ real? 
\item  oracle-use in Turing equivalence of a $\PA$ real with the join of two randoms?
\end{enumerate}
As discussed in \S\ref{iDz5who56}, these are truth-table reductions, but we care about the length of the prefix of the oracle
needed for the $n$-bit prefix of the output.
Regarding (i), we know it cannot be $n+\bigon$: \citet{ranlinMR2286414} showed that if $x$ is random and $x\leq_T y$ with
oracle-use $n+\bigon$ then $y\leq_T x$; this cannot happen when $x$ is incomplete since, as discussed in \S\ref{iDz5who56}, 
incomplete randoms do not compute $\PA$ reals.

With regard to (ii), the optimal oracle-use in computations of reals by randoms was characterized by  
\citet{optcod}. Some oracle-uses in our reductions between $\PA$ reals and joins of randoms are double-exponential (recall the explicit bounds in \S\ref{lWAmAPDqRz}, \S\ref{fzEp2Lp1eI}).\footnote{In our argument, the large oracle-use is due to the coding of partition-systems by reals.}
In the weaker result by \citet{minpal} discussed in \S\ref{iDz5who56}, the oracle-use is super-exponential. 
These sizes may be unavoidable, and a worthwhile task would be to establish such a characterization.

Another question stemming from \citet{MR2258713frank} who showed that incomplete randoms are computationally far from
$\PA$ reals, is
\begin{equation*}
\parbox{9cm}{how well can an incomplete random trace a $\PA$ real? }
\end{equation*}
A common notion of tracing is {\em c.e.-tracing}: a function $f$ is {\em $x$-\ce traceable} 
if there exist uniformly $x$-\ce $(D_i)$, $\abs{D_i}\leq n$ such that $\forall n,\ f(n)\in D_n$.
Some facts about traceability of $\PA$ and $\DNC$ reals were established by
\citet{KjosHanssenMStrans}.

A dual aspect of this question concerns the fact that incomplete randoms compute $\DNC$ functions but not
$\DNC$ functions with fixed finite range. The exact condition for the growth of the range of a $\DNC$ function which is computable
by an incomplete random was given by Miller (see  \citet[\S 1.2]{BienvenuHMN14}): given a computable $f$, 
an incomplete random computes an $f$-bounded $\DNC$ function  iff $\sum_n 1/f(n)<\infty$.

\bibliographystyle{abbrvnat}
\bibliography{randim}

\end{document}